\theoremstyle{plain}
\newtheorem{theorem}{Theorem}
\newtheorem{lemma}{Lemma}
\newtheorem{remark}{Remark}
\newtheorem{convention}{Convention}
\theoremstyle{definition}
\newtheorem{definition}{Definition}
\theoremstyle{remark}
\begin{document}

\title[an Extension from an Algebra to a $\sigma$-Algebra]{A realization of measurable sets as limit points }
\author{Jun Tanaka}
\address{University of California, Riverside, USA}
\email{juntanaka@math.ucr.edu, yonigeninnin@gmail.com, junextension@hotmail.com}

\author{Peter F. Mcloughlin}
\address{University of California, Riverside, USA}
\email{pmcloughlin@aol.com}


\keywords{Measure Theory, Caratheodory Extension Theorem, Metric}
\subjclass[2000]{Primary: 28A12, 28B20}
\date{December, 10, 2007}

\begin{abstract}
  Starting with a $\sigma$-finite measure on an algebra, we define a pseudometric and show how measurable sets from the Caratheodory Extension Theorem can be thought of as limit points of Cauchy sequences in the algebra.
\end{abstract}
\maketitle

\section{Introduction}\label{S:intro}

Generations of graduate students in the mathematical sciences have wrestled to understand the Caratheodory Extension Theorem. A crucial part of any advanced Real Analysis course, the process extends an algebra to a $\sigma$-algebra and a measure on an algebra to a measure on a $\sigma$-algebra. This is a powerful tool in measure theory and statistics.

Let $\mu$ be a measure on an algebra $\Omega \subset$ $ \mathbf{P}(X)$. The Caratheodory Extension consists of the following two main definitions: Definition 1; $\mu^{\ast}$ : $\mathbf{P}(X)   \rightarrow$ $\mathbb{R}^{+}$ where $\mu^{\ast} (E)$ = $\inf \{ \sum \mu (A_{i}) \mid  E  \subseteq \cup  A_{i}$ and $A_{i}  \in \Omega$ for all i $\geq $ 1$\}$ [1].
Definition 2; E $\in \mathbf{P}(X)$ is Measurable with respect to $\mu^{\ast}$ if for every set $A$ $\in \mathbf{P}(X)$, we have $\mu^{\ast} (A)= \mu^{\ast}(A \cap E) + \mu^{\ast}(A \cap E^{\textbf{C}} )$.

In general $\mu^{\ast}$ is not a measure on $\mathbf{P}(X)$. In fact it turns out that the set $M$ of measurable sets forms a $\sigma$-algebra and that ($\mu^{\ast}$, $M$) is a measure space. Where definition 2 came from always seemed a mystery to the authors.

In this paper, we propose what we believe to be a simplification of the proof of the Caratheodory Extension Theorem when $\mu$ is $\sigma$-finite. Our first step is to define the pseudometric $d (\cdot,\cdot)$ = $\mu^{\ast} (\cdot \triangle \cdot)$. We will say a sequence $\{ B_{n} \}$ in $\Omega$ is $\mu$-Cauchy iff $\lim d ( B_{n}  , B_{m}   )  \rightarrow $ 0. Next we define $ \widetilde{\mathbf{S}}$ = $\{S \in \mathbf{P}(X)  | \exists \ \mu$-Cauchy sequence  $\{B_{n}   \}   $ s.t. $ \lim \mu^{\ast} ( B_{n}   \triangle S)  =0   \}$.

In particular, $ \widetilde{\mathbf{S}}$ merely consists of limit points of $\mu$-Cauchy sequences. The authors believe that $ \widetilde{\mathbf{S}}$ is both an intuitive and natural definition. Our $ \widetilde{\mathbf{S}}$ will replace Definition 2 in the Caratheodory Extension Theorem. For $S$ $\in \widetilde{\mathbf{S}}$, we define $\widetilde{\mu} (S) $ =  $\lim \mu ( B_{n}   )   $ where $\{ B_{n}   \}$ is a $\mu$-Cauchy sequence which converges to $S$. We will show that ($\widetilde{\mu}$, $ \widetilde{\mathbf{S}}$) is the measure space obtained by the Caratheodory Extension Theorem.

\section{Main Results}
Let $\mu$ be a finite measure on an algebra $\Omega \subset$ $ \mathbf{P}(X)$ and $\mu^{\ast}$ be the outer measure defined by $\mu^{\ast}(A)$  = $\inf \{ \sum \mu (A_{i}) \mid  E  \subseteq \cup A_{i}$ and $A_{i}  \in \Omega$ for all i $\geq $ 1$\}$ for any $A \in \mathbf{P}(X)$.

\begin{lemma}\label{L:1}
$d$ : $\mathbf{P}(X)  \times   \mathbf{P}(X)   \rightarrow$ $\mathbb{R}^{+}$ where $d(A,B)$ = $\mu^{\ast}(A \triangle B)$ is a pseudometric.
\end{lemma}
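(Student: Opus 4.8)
The plan is to check the three defining properties of a pseudometric directly from the elementary properties of the outer measure $\mu^{\ast}$ — namely $\mu^{\ast}(\emptyset)=0$, monotonicity, and finite subadditivity — together with the algebra of symmetric differences. Nonnegativity and finiteness of $d$ are immediate: $\mu^{\ast}$ takes values in $\mathbb{R}^{+}$, and since $\mu$ is finite we have $\mu^{\ast}(E) \leq \mu(X) < \infty$ for every $E \in \mathbf{P}(X)$, so $d$ is a genuine real-valued function.

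First I would dispose of the two easy axioms. Since $A \triangle A = \emptyset$, we get $d(A,A) = \mu^{\ast}(\emptyset) = 0$. Symmetry follows at once from the set identity $A \triangle B = B \triangle A$, giving $d(A,B) = d(B,A)$.

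The substantive step is the triangle inequality. I would first prove the purely set-theoretic inclusion
\[
A \triangle C \ \subseteq\ (A \triangle B) \cup (B \triangle C) \qquad \text{for all } A,B,C \in \mathbf{P}(X),
\]
by taking $x \in A \triangle C$, noting that $x$ belongs to exactly one of $A,C$, and splitting into the cases $x \in B$ and $x \notin B$; in each case one checks that $x$ lands in $A \triangle B$ or in $B \triangle C$. Applying monotonicity and then finite subadditivity of $\mu^{\ast}$ then yields
\[
d(A,C) = \mu^{\ast}(A \triangle C) \leq \mu^{\ast}\bigl((A \triangle B) \cup (B \triangle C)\bigr) \leq \mu^{\ast}(A \triangle B) + \mu^{\ast}(B \triangle C) = d(A,B) + d(B,C).
\]

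The hard part, such as it is, is only the symmetric-difference inclusion above; once that is in hand the rest is a one-line application of standard outer-measure facts. It is worth remarking that $d$ is in general only a pseudometric and not a metric, since $d(A,B)=0$ forces merely $\mu^{\ast}(A\triangle B)=0$ rather than $A=B$; this is precisely the slack that the later passage to limit points of $\mu$-Cauchy sequences exploits.
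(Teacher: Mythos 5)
Your proof is correct and is the standard argument: the paper itself states Lemma~\ref{L:1} without proof, and your verification of the three pseudometric axioms via $A \triangle C \subseteq (A \triangle B) \cup (B \triangle C)$ together with monotonicity and subadditivity of $\mu^{\ast}$ is exactly what is being taken for granted there. Your closing remark that $d$ fails to be a metric only up to $\mu^{\ast}$-null symmetric differences also matches the paper's Remark~\ref{R:1}.
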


\begin{remark}\label{R:1}
$d$ is a metric on $\mathbf{P}(X)_{\diagup_{\sim} } $ where A $\sim$ B  iff   $\mu^{\ast}(A \triangle B)$ =0, [2].
\end{remark}

\begin{lemma}\label{L:2}
$d(A_{1} \cup A_{2},A_{3} \cup A_{4})$  $\leq$   $d(A_{1} , A_{3})$ +  $d(A_{2} , A_{4})$ for any $A_{i} \in  $ $\mathbf{P}(X)$ .
\end{lemma}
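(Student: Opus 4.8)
The plan is to reduce the inequality to two standard facts about the outer measure $\mu^{\ast}$ — monotonicity and finite subadditivity — together with a single purely set-theoretic containment. Specifically, I would first establish the inclusion
\[
(A_{1} \cup A_{2}) \triangle (A_{3} \cup A_{4}) \ \subseteq \ (A_{1} \triangle A_{3}) \cup (A_{2} \triangle A_{4}).
\]
To verify this, take $x$ in the left-hand side; by symmetry assume $x \in A_{1} \cup A_{2}$ but $x \notin A_{3} \cup A_{4}$, so in particular $x \notin A_{3}$ and $x \notin A_{4}$. Since $x$ lies in $A_{1}$ or in $A_{2}$, it lies either in $A_{1} \setminus A_{3} \subseteq A_{1} \triangle A_{3}$ or in $A_{2} \setminus A_{4} \subseteq A_{2} \triangle A_{4}$, which gives the containment.

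Next I would apply $\mu^{\ast}$ to both sides. Monotonicity of the outer measure yields
\[
\mu^{\ast}\bigl((A_{1} \cup A_{2}) \triangle (A_{3} \cup A_{4})\bigr) \ \leq \ \mu^{\ast}\bigl((A_{1} \triangle A_{3}) \cup (A_{2} \triangle A_{4})\bigr),
\]
and countable (hence finite) subadditivity of $\mu^{\ast}$ gives
\[
\mu^{\ast}\bigl((A_{1} \triangle A_{3}) \cup (A_{2} \triangle A_{4})\bigr) \ \leq \ \mu^{\ast}(A_{1} \triangle A_{3}) + \mu^{\ast}(A_{2} \triangle A_{4}).
\]
Chaining these two inequalities and rewriting in terms of $d$ produces exactly $d(A_{1} \cup A_{2}, A_{3} \cup A_{4}) \leq d(A_{1}, A_{3}) + d(A_{2}, A_{4})$.

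I do not anticipate a genuine obstacle here: the only content is the set-theoretic inclusion, and even that is a routine case check. The one point worth stating explicitly for completeness is that $\mu^{\ast}$, as defined by the infimum of covering sums, is indeed monotone and countably subadditive on all of $\mathbf{P}(X)$ — both are immediate from the definition (monotonicity because any cover of the larger set covers the smaller; subadditivity by concatenating near-optimal covers and an $\varepsilon/2^{n}$ argument). If the paper prefers, these properties of $\mu^{\ast}$ can be recorded as a short preliminary remark so that the proof of Lemma~\ref{L:2} is a two-line consequence.
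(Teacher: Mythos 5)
Your proposal is correct and follows essentially the same route as the paper: the paper also establishes the containment $(A_{1}\cup A_{2})\triangle(A_{3}\cup A_{4})\subseteq(A_{1}\triangle A_{3})\cup(A_{2}\triangle A_{4})$ (by expanding the symmetric difference into intersections with complements rather than by element-chasing) and then concludes by monotonicity and subadditivity of $\mu^{\ast}$. The only cosmetic difference is how the set-theoretic inclusion is verified.
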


\begin{proof}

$d(A_{1} \cup A_{2},A_{3} \cup A_{4})$

= $\mu^{\ast} ((A_{1} \cup A_{2}) \cap (A_{3} \cup A_{4})^{\textbf{C}}  \ \cup \  (A_{3} \cup A_{4}) \cup (A_{1} \cup A_{2})^{\textbf{C}} )$

= $\mu^{\ast} (A_{1} \cap A_{3}^{\textbf{C}} \cap A_{4}^{\textbf{C}} \ \ \cup \ \ A_{2} \cap A_{3}^{\textbf{C}}  \cap A_{4}^{\textbf{C}} \ \ \cup \ \  A_{3} \cap A_{1}^{\textbf{C}} \cap A_{2}^{\textbf{C}} \ \  \cup \ \  A_{4} \cap A_{1}^{\textbf{C}}  \cap A_{2}^{\textbf{C}})$

$\leq$ $\mu^{\ast} (A_{1} \cap A_{3}^{\textbf{C}}  \ \cup  \  A_{2} \cap A_{4}^{\textbf{C}} \ \cup \ A_{3} \cap A_{1}^{\textbf{C}} \  \cup \ A_{4} \cap A_{2}^{\textbf{C}})$

$\leq$ $\mu^{\ast}(A_{1} \triangle A_{3})$ + $\mu^{\ast}(A_{2} \triangle A_{4})$ = $d(A_{1},A_{3})$ + $d(A_{2},A_{4})$.

\end{proof}

\begin{lemma}\label{L:3}
$\mu^{\ast} |_{\Omega} = \mu $, from page 292 of [1].
\end{lemma}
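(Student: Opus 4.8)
The plan is to establish, for each $A \in \Omega$, the two inequalities $\mu^{\ast}(A) \le \mu(A)$ and $\mu(A) \le \mu^{\ast}(A)$ separately, and then combine them.

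For the first inequality, note that $A \subseteq A \cup \emptyset \cup \emptyset \cup \cdots$ is a countable cover of $A$ by members of $\Omega$, since $\emptyset$ lies in every algebra. Plugging this particular cover into the definition $\mu^{\ast}(A) = \inf\{\sum \mu(A_i) \mid A \subseteq \bigcup A_i,\ A_i \in \Omega\}$ gives $\mu^{\ast}(A) \le \mu(A) + \mu(\emptyset) + \cdots = \mu(A)$.

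For the reverse inequality, let $\{A_i\}_{i\ge 1} \subseteq \Omega$ be an arbitrary cover of $A$, so $A \subseteq \bigcup_i A_i$. Then $A = \bigcup_i (A \cap A_i)$, and each $A \cap A_i \in \Omega$ because $\Omega$ is an algebra. Disjointify in the usual way: put $B_1 = A \cap A_1$ and $B_n = (A \cap A_n) \setminus \bigcup_{j<n}(A \cap A_j)$ for $n \ge 2$. Each $B_n \in \Omega$ (algebras are closed under finite unions and differences), the $B_n$ are pairwise disjoint, $B_n \subseteq A_n$, and $\bigcup_n B_n = A \in \Omega$. Since $\mu$ is a measure on the algebra $\Omega$, it is countably additive on a disjoint union of members of $\Omega$ whenever that union again lies in $\Omega$; hence $\mu(A) = \sum_n \mu(B_n) \le \sum_n \mu(A_n)$, the last step by monotonicity of $\mu$ on $\Omega$. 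Taking the infimum over all covers $\{A_i\}$ yields $\mu(A) \le \mu^{\ast}(A)$, and combining with the first inequality gives $\mu^{\ast}(A) = \mu(A)$, i.e. $\mu^{\ast}|_{\Omega} = \mu$.

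The only step that is more than bookkeeping is the identity $\mu(A) = \sum_n \mu(B_n)$ for disjoint $B_n \in \Omega$ with $\bigcup_n B_n = A \in \Omega$: this is precisely the content of countable additivity in the definition of a measure on an algebra (the part going beyond finite additivity), and it is what forces the use of the disjointification trick rather than a naive appeal to finite subadditivity. Everything else — membership of $A \cap A_i$ and of the $B_n$ in $\Omega$, monotonicity, and the infimum passage — is routine.
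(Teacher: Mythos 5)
Your proof is correct and is essentially the standard argument that the paper's citation to Royden points to: the trivial cover $\{A,\emptyset,\emptyset,\dots\}$ for one inequality, and disjointification of an arbitrary cover inside $\Omega$ plus countable additivity of $\mu$ on the algebra (applied to a disjoint union that happens to lie in $\Omega$) for the other. Nothing to add.
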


\begin{definition}\label{D:1}
If $B_{n} \in \Omega$, then $\{B_{n} \}$ is called a $\mu  $-Cauchy sequence if $\lim \mu(B_{n} \triangle B_{m})$ $\rightarrow$ 0 as $n,m \rightarrow \infty$.
\end{definition}

\begin{convention} Unless otherwise stated, $\{B^{\alpha}_{n} \}$ and $\{B_{n}   \}   $ will be $\mu  $-Cauchy sequences.
\end{convention}

\begin{lemma}\label{L:limit} $\{B_{n}   \}   $ a $\mu  $-Cauchy sequence implies $\{ \mu ( B_{n} ) \}$ is a Cauchy sequence of real numbers.
\end{lemma}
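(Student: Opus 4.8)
The plan is to reduce the statement to the single pointwise estimate
\[
|\mu(A) - \mu(B)| \le \mu(A \triangle B) \qquad \text{for all } A, B \in \Omega,
\]
and then let completeness of $\mathbb{R}$ do the rest. To prove the estimate, I would use only finite additivity of $\mu$ on the algebra $\Omega$. Since $\Omega$ is an algebra, the sets $A \cap B$, $A \cap B^{\textbf{C}}$, and $B \cap A^{\textbf{C}}$ all belong to $\Omega$, and we have the disjoint decompositions $A = (A \cap B) \, \cup \, (A \cap B^{\textbf{C}})$ and $B = (A \cap B) \, \cup \, (B \cap A^{\textbf{C}})$. Finite additivity then gives $\mu(A) = \mu(A \cap B) + \mu(A \cap B^{\textbf{C}})$ and $\mu(B) = \mu(A \cap B) + \mu(B \cap A^{\textbf{C}})$. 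Here I use that $\mu$ is a \emph{finite} measure, so each of these quantities is a real number and the subtraction below is legitimate; this is the one place where finiteness is essential, and it is the only real subtlety in the argument.

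Subtracting the two identities yields $\mu(A) - \mu(B) = \mu(A \cap B^{\textbf{C}}) - \mu(B \cap A^{\textbf{C}})$. Both terms on the right are nonnegative and, by monotonicity of $\mu$ on $\Omega$, each is bounded above by $\mu\big((A \cap B^{\textbf{C}}) \cup (B \cap A^{\textbf{C}})\big) = \mu(A \triangle B)$. Hence $|\mu(A) - \mu(B)| \le \mu(A \triangle B)$, as claimed. (One could phrase the same computation via Lemma~\ref{L:3}, writing $\mu(A \triangle B) = \mu^{\ast}(A \triangle B) = d(A,B)$, but it is not needed.)

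Finally, given a $\mu$-Cauchy sequence $\{B_n\}$, apply the estimate with $A = B_n$ and $B = B_m$ to get
\[
|\mu(B_n) - \mu(B_m)| \le \mu(B_n \triangle B_m).
\]
By Definition~\ref{D:1} the right-hand side tends to $0$ as $n, m \to \infty$, so $\{\mu(B_n)\}$ is a Cauchy sequence of real numbers, and therefore converges. I do not anticipate any genuine obstacle here; the proof is short, and the only point requiring care is invoking finiteness of $\mu$ to justify the subtraction $\mu(A) - \mu(B)$ rather than working with $\mu(A \cap B^{\textbf{C}}) - \mu(B \cap A^{\textbf{C}})$ from the outset.
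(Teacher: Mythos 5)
Your proof is correct and follows essentially the same route as the paper, which simply asserts the key inequality $|\mu(B_m)-\mu(B_n)|\le\mu(B_m\triangle B_n)$ without justification (and with a typo, writing $B_m\triangle B_m$). Your derivation of that inequality from finite additivity and monotonicity just fills in the detail the paper omits.
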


\begin{proof}
$ |  \mu(B_{m} ) - \mu ( B_{n} )  | $ $ \leq $  $ \mu ( B_{m} \triangle B_{m} )  $ $\rightarrow  $ 0. Thus, $\{ \mu ( B_{m}) \}$ is a Cauchy sequence in $\mathbb{R}$. Thus, the claim follows.

\end{proof}

\begin{lemma}\label{L:4}

$\{B^{\alpha}_{n} \cup  B^{\gamma}_{n} \}$ and $\{(B^{\alpha}_{n})^{\textbf{C}} \}$ are  $\mu  $-Cauchy sequences.

\end{lemma}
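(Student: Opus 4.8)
The plan is to reduce both claims to the pseudometric estimates already established, using only that $\Omega$ is an algebra (hence closed under finite unions and complements) and that $\mu^{\ast}$ agrees with $\mu$ on $\Omega$ by Lemma \ref{L:3}, so that "$\mu$-Cauchy" and "Cauchy in the pseudometric $d$" mean the same thing for sequences in $\Omega$.

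For the union $\{B^{\alpha}_{n}\cup B^{\gamma}_{n}\}$: first note each term lies in $\Omega$ because $\Omega$ is an algebra. Then I would apply Lemma \ref{L:2} with $A_{1}=B^{\alpha}_{n}$, $A_{2}=B^{\gamma}_{n}$, $A_{3}=B^{\alpha}_{m}$, $A_{4}=B^{\gamma}_{m}$ to get
\[
d\bigl(B^{\alpha}_{n}\cup B^{\gamma}_{n},\,B^{\alpha}_{m}\cup B^{\gamma}_{m}\bigr)\le d\bigl(B^{\alpha}_{n},B^{\alpha}_{m}\bigr)+d\bigl(B^{\gamma}_{n},B^{\gamma}_{m}\bigr),
\]
and since both sequences on the right are $\mu$-Cauchy, the left side tends to $0$ as $n,m\to\infty$.

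For the complement $\{(B^{\alpha}_{n})^{\textbf{C}}\}$: again each term is in $\Omega$ since $\Omega$ is an algebra. The key elementary observation is that symmetric difference is invariant under complementation, i.e. $A^{\textbf{C}}\triangle B^{\textbf{C}} = A\triangle B$ for all $A,B\in\mathbf{P}(X)$ (both equal $(A\setminus B)\cup(B\setminus A)$). Hence $d\bigl((B^{\alpha}_{n})^{\textbf{C}},(B^{\alpha}_{m})^{\textbf{C}}\bigr)=\mu^{\ast}(B^{\alpha}_{n}\triangle B^{\alpha}_{m})=d\bigl(B^{\alpha}_{n},B^{\alpha}_{m}\bigr)\to 0$, so $\{(B^{\alpha}_{n})^{\textbf{C}}\}$ is $\mu$-Cauchy.

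There is essentially no serious obstacle here; the only points that need care are the bookkeeping ones — confirming closure of $\Omega$ under the relevant operations so that the new sequences actually lie in $\Omega$, invoking Lemma \ref{L:3} to pass between $\mu$ and $\mu^{\ast}$, and verifying the identity $A^{\textbf{C}}\triangle B^{\textbf{C}}=A\triangle B$. The substantive content was already packaged into Lemma \ref{L:2}.
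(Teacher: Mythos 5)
Your proof is correct and follows essentially the same route as the paper: Lemma \ref{L:2} with $A_{1}=B^{\alpha}_{n}$, $A_{2}=B^{\gamma}_{n}$, $A_{3}=B^{\alpha}_{m}$, $A_{4}=B^{\gamma}_{m}$ for the union, and the identity $A^{\textbf{C}}\triangle B^{\textbf{C}}=A\triangle B$ for the complement. Your added bookkeeping (closure of $\Omega$ and the appeal to Lemma \ref{L:3}) is a harmless refinement of what the paper leaves implicit.
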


\begin{proof} by Lemma $\ref{L:2}$,

$d(B^{\alpha}_{n} \cup B^{\gamma}_{n},B^{\alpha}_{m} \cup B^{\gamma}_{m})$ $\leq$ $d(B^{\alpha}_{n} ,B^{\alpha}_{m} )$ + $d( B^{\gamma}_{n}, B^{\gamma}_{m})$.

Moreover, $d((B^{\alpha}_{n})^{\textbf{C}},(B^{\alpha}_{m})^{\textbf{C}})$ =  $\mu((B^{\alpha}_{n})^{\textbf{C}} \triangle (B^{\alpha}_{m})^{\textbf{C}})$

=  $\mu(B^{\alpha}_{n} \triangle B^{\alpha}_{m})$ =  $d (B^{\alpha}_{n}, B^{\alpha}_{m})$. Hence the claim follows.

\end{proof}

\begin{definition}

Let $ \widetilde{\mathbf{S}}$ = $\{S \in \mathbf{P}(X)  | \exists \ \mu$-Cauchy sequence  $\{B_{n}   \}   $ s.t. $ \lim \mu^{\ast} ( B_{n}   \triangle S)  =0   \}$

\end{definition}

\begin{lemma}\label{L:5} $ \widetilde{\mathbf{S}}$ is an algebra.
\end{lemma}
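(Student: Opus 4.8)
The plan is to verify the three defining properties of an algebra for $\widetilde{\mathbf{S}}$: it contains $X$ (and $\emptyset$), it is closed under complementation, and it is closed under finite unions. The first is immediate, since the constant sequence $B_n = X \in \Omega$ is trivially $\mu$-Cauchy and converges to $X$ in the pseudometric, so $X \in \widetilde{\mathbf{S}}$ (likewise $\emptyset$).

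For closure under complementation, suppose $S \in \widetilde{\mathbf{S}}$ with witnessing $\mu$-Cauchy sequence $\{B_n\}$ so that $\mu^{\ast}(B_n \triangle S) \to 0$. By Lemma~\ref{L:4}, $\{(B_n)^{\textbf{C}}\}$ is again a $\mu$-Cauchy sequence, and since $A \triangle B = A^{\textbf{C}} \triangle B^{\textbf{C}}$ for all sets, we get $\mu^{\ast}((B_n)^{\textbf{C}} \triangle S^{\textbf{C}}) = \mu^{\ast}(B_n \triangle S) \to 0$. Hence $S^{\textbf{C}} \in \widetilde{\mathbf{S}}$.

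For closure under finite unions it suffices to treat two sets. Let $S_1, S_2 \in \widetilde{\mathbf{S}}$ with witnessing sequences $\{B^1_n\}$ and $\{B^2_n\}$. By Lemma~\ref{L:4}, $\{B^1_n \cup B^2_n\}$ is a $\mu$-Cauchy sequence in $\Omega$. It remains to show $\mu^{\ast}\big((B^1_n \cup B^2_n) \triangle (S_1 \cup S_2)\big) \to 0$. Here I would invoke the subadditivity estimate from the proof of Lemma~\ref{L:2} — more precisely, the set-inclusion $(A_1 \cup A_2)\triangle(A_3\cup A_4) \subseteq (A_1\triangle A_3)\cup(A_2\triangle A_4)$ together with monotonicity and subadditivity of $\mu^{\ast}$ — applied with $A_1 = B^1_n$, $A_2 = B^2_n$, $A_3 = S_1$, $A_4 = S_2$, to obtain
\[
\mu^{\ast}\big((B^1_n \cup B^2_n) \triangle (S_1 \cup S_2)\big) \leq \mu^{\ast}(B^1_n \triangle S_1) + \mu^{\ast}(B^2_n \triangle S_2) \to 0.
\]
Thus $S_1 \cup S_2 \in \widetilde{\mathbf{S}}$, and by induction $\widetilde{\mathbf{S}}$ is closed under all finite unions.

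The only subtlety — and the step I would be most careful about — is that Lemma~\ref{L:2} as stated is phrased for the pseudometric $d$ between elements of $\Omega$, whereas here one of the arguments ($S_1$, $S_2$) need not lie in $\Omega$. The remedy is that the chain of inequalities in the proof of Lemma~\ref{L:2} only uses monotonicity and countable subadditivity of $\mu^{\ast}$, which hold on all of $\mathbf{P}(X)$; so the triangle-type bound $d(A_1\cup A_2, A_3\cup A_4)\le d(A_1,A_3)+d(A_2,A_4)$ is in fact valid for arbitrary $A_i \in \mathbf{P}(X)$, which is exactly what Lemma~\ref{L:2} already claims. I would state this explicitly so the application is clean. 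Everything else is routine.
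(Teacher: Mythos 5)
Your proof is correct and follows essentially the same route as the paper: Lemma~\ref{L:4} supplies the $\mu$-Cauchy witnessing sequences $\{B^1_n \cup B^2_n\}$ and $\{(B_n)^{\textbf{C}}\}$, and the triangle-type bound of Lemma~\ref{L:2} (which, as you note, is already stated for arbitrary $A_i \in \mathbf{P}(X)$) gives convergence to $S_1 \cup S_2$. You are merely more explicit than the paper about nonemptiness and the complement case, which the paper dispatches with ``similarly.''
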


\begin{proof}
Let $S_{1} , S_{2} \in$ $ \widetilde{\mathbf{S}}$. There exist $\mu$-Cauchy sequences  $ \{ B^{1}_{n} \}$ and $ \{ B^{2}_{n} \}$ which correspond to $S_{1} $ and $ S_{2}$ respectively.

Now $ \mu^{\ast} (  S_{1} \cup S_{2}  \  \triangle \   B^{1}_{n} \cup  B^{2}_{n}    )  \leq  $ $  \mu^{\ast} (  S_{1}   \Delta    B^{1}_{n}     )      $   +  $\mu^{\ast} (  S_{2}   \Delta    B^{2}_{n}    )   $ by Lemma $\ref{L:2}$.

Thus , since $\{ B^{1}_{n} \cup  B^{2}_{n} \}$ is a $\mu$-Cauchy sequence by Lemma $\ref{L:4}$, by taking the limit on both sides,  $S_{1} \cup S_{2} \in$ $ \widetilde{\mathbf{S}}$.

In addition, this covers the finite union case. Note now that $\{ B^{1}_{n} \cup  B^{2}_{n} \}$ is a $\mu $-Cauchy sequence which corresponds to  $S_{1} \cup S_{2}$.

Similarly,  $(S_{1})^{\textbf{C}} \in$ $ \widetilde{\mathbf{S}}$, thus $ \widetilde{\mathbf{S}}$ is an algebra.

\end{proof}

\begin{definition}\label{D:2} Definition of the measure on $ \widetilde{\mathbf{S}}$.

Let $S \in$ $ \widetilde{\mathbf{S}}$. Then there exists a $\mu$-Cauchy sequence $ \{ B_{n} \}$ which corresponds to $S$. By lemma $\ref{L:limit}$, $\lim \mu ( B_{n}   )   $ exists. We define $\widetilde{\mu} (S) $ =  $\lim \mu ( B_{n}   )   $.

\begin{remark}\label{R:2} By lemmas $\ref{L:1}$ and $\ref{L:3}$, $ |  \mu^{\ast} (S ) - \mu ( B_{n} )  | $=$ |  \mu^{\ast} (S \triangle \emptyset) - \mu^{\ast} ( B_{n} \triangle \emptyset)  |   \leq $  $ \mu^{\ast} ( B_{n} \triangle S )  $ for $S$ in $ \widetilde{\mathbf{S}}$. Thus, $\widetilde{\mu} (S) $= $\lim \mu ( B_{n}   )   $ =$ \mu^{\ast} (S ) $. Hence, $\mu^{\ast}  |_{\widetilde{\mathbf{S}}} $ = $\widetilde{\mu}$.

\end{remark}
\end{definition}

\begin{lemma}\label{L:6}  If $S_{1} , S_{2}  \in$ $ \widetilde{\mathbf{S}}$ are disjoint, then $\widetilde{\mu} (S_{1} \cup S_{2} ) $ = $ \widetilde{\mu} (S_{1} )   $ + $ \widetilde{\mu} (S_{2} )   $.
\end{lemma}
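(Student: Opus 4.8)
The plan is to exploit finite additivity of $\mu$ on the algebra $\Omega$ together with the convergence estimate from Remark \ref{R:2}. Let $\{B^1_n\}$ and $\{B^2_n\}$ be $\mu$-Cauchy sequences in $\Omega$ corresponding to $S_1$ and $S_2$ respectively, so that $\mu^{\ast}(B^1_n \triangle S_1) \to 0$ and $\mu^{\ast}(B^2_n \triangle S_2) \to 0$. By Lemma \ref{L:4} the sequence $\{B^1_n \cup B^2_n\}$ is $\mu$-Cauchy, and by Lemma \ref{L:5} (its proof) it corresponds to $S_1 \cup S_2$, so by Definition \ref{D:2} we have $\widetilde{\mu}(S_1 \cup S_2) = \lim \mu(B^1_n \cup B^2_n)$.

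Next I would use the elementary identity for sets in the algebra, $\mu(B^1_n \cup B^2_n) = \mu(B^1_n) + \mu(B^2_n) - \mu(B^1_n \cap B^2_n)$, which holds since $\mu$ is a finitely additive measure on $\Omega$. Taking limits, the first two terms converge to $\widetilde{\mu}(S_1)$ and $\widetilde{\mu}(S_2)$ by Definition \ref{D:2}. So the whole lemma reduces to showing that $\mu(B^1_n \cap B^2_n) \to 0$, i.e. that $\{B^1_n \cap B^2_n\}$ converges to $S_1 \cap S_2 = \emptyset$ in the pseudometric, whence $\lim \mu(B^1_n \cap B^2_n) = \mu^{\ast}(\emptyset) = 0$ by Remark \ref{R:2} (or Lemma \ref{L:limit} applied to this sequence).

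The main obstacle, then, is the estimate $\mu^{\ast}\big((B^1_n \cap B^2_n) \triangle (S_1 \cap S_2)\big) \to 0$. For this I would establish an intersection analogue of Lemma \ref{L:2}: namely $d(A_1 \cap A_2, A_3 \cap A_4) \le d(A_1,A_3) + d(A_2,A_4)$. This follows either by a direct set-theoretic computation like the one in Lemma \ref{L:2}, or more slickly by combining Lemma \ref{L:2} with complementation (using $A \cap B = (A^{\textbf{C}} \cup B^{\textbf{C}})^{\textbf{C}}$ and the fact, noted in the proof of Lemma \ref{L:4}, that complementation is an isometry for $d$). Applying this with $A_1 = B^1_n$, $A_3 = S_1$, $A_2 = B^2_n$, $A_4 = S_2$ gives $\mu^{\ast}\big((B^1_n \cap B^2_n) \triangle (S_1 \cap S_2)\big) \le \mu^{\ast}(B^1_n \triangle S_1) + \mu^{\ast}(B^2_n \triangle S_2) \to 0$.

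Finally, since $S_1 \cap S_2 = \emptyset$, the displayed inequality says $\mu^{\ast}(B^1_n \cap B^2_n) = \mu^{\ast}\big((B^1_n \cap B^2_n)\triangle\emptyset\big) \to 0$, and because $B^1_n \cap B^2_n \in \Omega$ we have $\mu(B^1_n \cap B^2_n) = \mu^{\ast}(B^1_n \cap B^2_n) \to 0$ by Lemma \ref{L:3}. Passing to the limit in $\mu(B^1_n \cup B^2_n) = \mu(B^1_n) + \mu(B^2_n) - \mu(B^1_n \cap B^2_n)$ then yields $\widetilde{\mu}(S_1 \cup S_2) = \widetilde{\mu}(S_1) + \widetilde{\mu}(S_2)$, as desired. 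I would probably state the intersection inequality as a short separate lemma (or a remark after Lemma \ref{L:2}) so the argument here stays clean; everything else is routine limit-passing.
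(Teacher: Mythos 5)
Your proof is correct, and it shares the paper's central reduction --- everything hinges on showing $\lim \mu(B^1_n \cap B^2_n)=0$ --- but you reach that point by a different route. The paper applies Lemma \ref{L:5} to $S_1^{\textbf{C}} \cup S_2^{\textbf{C}}$, observes that disjointness of $S_1,S_2$ forces $(S_1^{\textbf{C}} \cup S_2^{\textbf{C}}) \triangle ((B^1_n)^{\textbf{C}} \cup (B^2_n)^{\textbf{C}}) = B^1_n \cap B^2_n$, and then finishes by splitting $B^1_n \cup B^2_n$ into three disjoint pieces and using $\lim \mu(B^i_n)=\lim \mu(B^i_n \cap (B^j_n)^{\textbf{C}})$. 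You instead prove the intersection analogue of Lemma \ref{L:2}, namely $d(A_1\cap A_2, A_3\cap A_4) \le d(A_1,A_3)+d(A_2,A_4)$ --- which, as you note, is just Lemma \ref{L:2} conjugated by complementation, the same de Morgan manoeuvre the paper performs in one specific instance --- and then finish with inclusion-exclusion, $\mu(B^1_n\cup B^2_n)=\mu(B^1_n)+\mu(B^2_n)-\mu(B^1_n\cap B^2_n)$. Your version is arguably cleaner: the intersection inequality is a reusable fact stated once rather than an ad hoc symmetric-difference identity, and inclusion-exclusion avoids the three-term bookkeeping. The one point to flag is that the subtracted form of inclusion-exclusion requires the terms to be finite, which holds here since $\mu$ is a finite measure. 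Both arguments are of essentially the same length and depth; yours localizes the set-theoretic work in an auxiliary lemma, the paper's localizes it in the disjointness observation.
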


\begin{proof}
Let $S_{1} , S_{2}  \in$ $ \widetilde{\mathbf{S}}$ be disjoint, then there exists $\mu$-Cauchy sequences  $ \{ B^{1}_{n} \} $ and $ \{ B^{2}_{n} \}$ which correspond to $S_{1} $ and $ S_{2}$ respectively. By Lemma $\ref{L:5}$, $ S_{1}^{\textbf{C}} \cup  S_{2}^{\textbf{C}} $ $\in \widetilde{\mathbf{S}}$ and $ \{ (B^{1}_{n})^{\textbf{C}} \cup (B^{2}_{n})^{\textbf{C}}  \} $ is a $\mu$-Cauchy sequence which corresponds to $ S_{1}^{\textbf{C}} \cup  S_{2}^{\textbf{C}} $. It follows $\lim \mu^{\ast} ( S_{1}^{\textbf{C}} \cup  S_{2}^{\textbf{C}}   \  \triangle \ (B_{n}^{1} )^{\textbf{C}} \cup   (B_{n}^{2} )^{\textbf{C}}     ) $ = 0. Note ($S_{1}^{\textbf{C}} \cup  S_{2}^{\textbf{C}}$ ) $\triangle  $  ($(B_{n}^{1} )^{\textbf{C}} \cup   (B_{n}^{2} )^{\textbf{C}}    $)= $ B_{n}^{1} \cap B_{n}^{2}  $ since $S_{1}$ and $S_{2}$ are disjoint. Now we have   $\lim \mu^{\ast} (  B_{n}^{1} \cap B_{n}^{2} ) $ = $\lim \mu (  B_{n}^{1} \cap B_{n}^{2} ) $ = 0   $\ $  (1).

Note $B^{1}_{n}$ =  $(B^{1}_{n}  \cap    B^{2}_{n})$ $\cup$  $B^{1}_{n} \cap  (B^{2}_{n})^{\textbf{C}} $  implies $\mu(B^{1}_{n})$ = $\mu( B^{1}_{n}  \cap    B^{2}_{n}) $ + $\mu( B^{1}_{n} \cap  (B^{2}_{n})^{\textbf{C}} )$ and $\lim   \mu(B^{1}_{n})$ = $\lim  \mu( B^{1}_{n} \cap  (B^{2}_{n})^{\textbf{C}} )$ by (1). Similarly, $\lim   \mu(B^{2}_{n})$ = $\lim  \mu( B^{2}_{n} \cap  (B^{1}_{n})^{\textbf{C}} )$.

Moreover, $\mu( B^{1}_{n}  \cup    B^{2}_{n})$ = $\mu( B^{1}_{n}  \cap    B^{2}_{n} \ \  \cup \ \    B^{1}_{n} \cap  (B^{2}_{n})^{\textbf{C}}    \ \ \cup  \ \  B^{2}_{n} \cap  (B^{1}_{n})^{\textbf{C}}  )$

= $\mu( B^{1}_{n}  \cap    B^{2}_{n} )$ +    $\mu (B^{1}_{n} \cap  (B^{2}_{n})^{\textbf{C}}) $ +  $\mu (B^{2}_{n} \cap  (B^{1}_{n})^{\textbf{C}}  )$. Thus, $\widetilde{\mu} (S_{1} \cup S_{2} ) $ = $\lim \mu (    B^{1}_{n}   \cup  B^{2}_{n}     )$ = $\lim \mu (    B^{1}_{n}) $  +   $\lim \mu (   B^{2}_{n}     )$ = $ \widetilde{\mu} (S_{1} )   $ + $ \widetilde{\mu} (S_{2} )  $ .

Therefore, $ \widetilde{\mu} $ is finitely additive.

\end{proof}

\begin{lemma}\label{L:9} $ \widetilde{\mathbf{S}}$ is a $\sigma$-algebra.
\end{lemma}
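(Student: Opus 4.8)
The plan is to upgrade the finite-additivity of $\widetilde{\mu}$ (Lemma~\ref{L:6}) to countable additivity, and simultaneously to show that a countable union of sets in $\widetilde{\mathbf{S}}$ again lies in $\widetilde{\mathbf{S}}$. Since $\widetilde{\mathbf{S}}$ is already an algebra by Lemma~\ref{L:5}, it suffices to show it is closed under countable \emph{disjoint} unions; the general case follows by the usual disjointification $\bigcup_k S_k = \bigcup_k \bigl(S_k \setminus \bigcup_{j<k} S_j\bigr)$, which stays inside $\widetilde{\mathbf{S}}$ because $\widetilde{\mathbf{S}}$ is an algebra.

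So let $\{S_k\}_{k\ge 1}$ be pairwise disjoint elements of $\widetilde{\mathbf{S}}$ and put $S = \bigcup_{k\ge 1} S_k$. First I would observe that the partial unions $T_N := \bigcup_{k=1}^N S_k$ lie in $\widetilde{\mathbf{S}}$, and that by Lemma~\ref{L:6} (finite additivity) together with Remark~\ref{R:2} ($\widetilde{\mu} = \mu^{\ast}|_{\widetilde{\mathbf{S}}}$) we have $\sum_{k=1}^N \widetilde{\mu}(S_k) = \widetilde{\mu}(T_N) = \mu^{\ast}(T_N) \le \mu^{\ast}(X) = \mu(X) < \infty$, using finiteness of $\mu$ and monotonicity of $\mu^{\ast}$. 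Hence $\sum_{k\ge 1}\widetilde{\mu}(S_k)$ converges, so the ``tails'' satisfy $\sum_{k>N}\widetilde{\mu}(S_k) \to 0$. Next, countable subadditivity of the outer measure $\mu^{\ast}$ gives
\[
\mu^{\ast}\bigl(S \triangle T_N\bigr) = \mu^{\ast}\Bigl(\bigcup_{k>N} S_k\Bigr) \le \sum_{k>N}\mu^{\ast}(S_k) = \sum_{k>N}\widetilde{\mu}(S_k) \xrightarrow[N\to\infty]{} 0.
\]
Thus the sequence $\{T_N\}$ in the algebra $\widetilde{\mathbf{S}}$ converges to $S$ in the pseudometric $d$; in particular it is $d$-Cauchy in $\widetilde{\mathbf{S}}$, and $\widetilde{\mu}(T_N) = \mu^{\ast}(T_N) \to \mu^{\ast}(S)$.

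The remaining point is that convergence of a sequence drawn from $\widetilde{\mathbf{S}}$ (rather than from $\Omega$) still produces an element of $\widetilde{\mathbf{S}}$ — i.e.\ that $\widetilde{\mathbf{S}}$ is ``$d$-complete relative to its own Cauchy sequences.'' For this I would use a diagonal argument: each $T_N$ has a $\mu$-Cauchy sequence $\{B^N_n\}_n \subset \Omega$ with $\mu^{\ast}(B^N_n \triangle T_N)\to 0$, so I can pick $n(N)$ with $\mu^{\ast}(B^N_{n(N)} \triangle T_N) < 1/N$; then by the triangle inequality for $d$ (Lemma~\ref{L:1}) the diagonal sequence $C_N := B^N_{n(N)} \in \Omega$ satisfies $d(C_N, S) \le d(C_N, T_N) + d(T_N, S) \to 0$, and a further triangle-inequality estimate shows $\{C_N\}$ is $\mu$-Cauchy. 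Hence $S \in \widetilde{\mathbf{S}}$. Finally, taking limits along $\{C_N\}$ and using Remark~\ref{R:2}, $\widetilde{\mu}(S) = \mu^{\ast}(S) = \lim_N \mu^{\ast}(T_N) = \lim_N \sum_{k=1}^N \widetilde{\mu}(S_k) = \sum_{k\ge 1}\widetilde{\mu}(S_k)$, so $\widetilde{\mu}$ is countably additive and $\widetilde{\mathbf{S}}$ is a $\sigma$-algebra.

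I expect the main obstacle to be precisely this last step — passing from Cauchy sequences \emph{in $\Omega$} to limits of sequences \emph{in $\widetilde{\mathbf{S}}$} — since the definition of $\widetilde{\mathbf{S}}$ only grants approximating sequences from $\Omega$. The diagonalization handles it cleanly provided one is careful that the chosen $\mu$-Cauchy witnesses for the $T_N$ can be truncated at a single index each; the subadditivity bound $\mu^{\ast}(S\triangle T_N)\to 0$ and the finiteness of $\mu$ (guaranteeing the series $\sum\widetilde{\mu}(S_k)$ converges) are what make the estimates go through.
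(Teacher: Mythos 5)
Your proof is correct and follows essentially the same route as the paper: bound the tail $\sum_{k>N}\widetilde{\mu}(S_k)$ using finite additivity and the finiteness of $\mu$, approximate $S$ by the partial union $T_N$, and then diagonalize through the $\mu$-Cauchy witnesses in $\Omega$ for each $T_N$ (your $C_N$ is exactly the paper's $Y_L=\cup_{i=1}^{N_L}B^i_{K_L}$). The only differences are cosmetic: you make explicit the disjointification reduction that the paper leaves implicit, and you fold the countable additivity of $\widetilde{\mu}$ into this lemma, which the paper defers to a separate lemma.
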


\begin{proof}
Let $S_{i} \in$ $ \widetilde{\mathbf{S}}$ be mutually disjoint and $\{B^{i}_{n}\} $ correspond to each $S_{i}$, for all i $ \geq$ 1. By lemma $\ref{L:6}$ and remark $\ref{R:2}$, we have $\sum_{i=1}^{n} \mu^{\ast}(S_{i}) = \sum_{i=1}^{n} \widetilde{\mu} (S_{i})= \widetilde{\mu} ( \cup_{i=1}^{n} S_{i})$= $\mu^{\ast}( \cup_{i=1}^{n}   S_{i} )   $.
We will show that $\cup_{i=1}^{\infty}   S_{i}$ $\in \widetilde{\mathbf{S}}$. First we will show that
\[
\mu^{\ast}( \cup_{i=1}^{\infty}   S_{i} ) \leq \sum_{i=1}^{\infty} \mu^{\ast}(S_{i})  <  \infty .
\]

For any $n$, $\mu^{\ast}( \cup_{i=1}^{\infty}   S_{i} ) \leq $ $\mu^{\ast}( \cup_{i=n+1}^{\infty}   S_{i} ) $ + $\mu^{\ast}( \cup_{i=1}^{n}   S_{i} ) \leq $ $\mu^{\ast}( X ) + \mu^{\ast}( X )$ by the subadditivity of $\mu^{\ast}$.
Thus, $\mu^{\ast}( \cup_{i=n+1}^{\infty}   S_{i} ) $ + $\sum_{i=1}^{n} \mu^{\ast}(S_{i}) \leq $ 2$\mu^{\ast}( X )$ = 2$\mu (X)$ for any n. By the finiteness of $\mu$ and the subadditivity of $\mu^{\ast}$, we must have $\mu^{\ast}( \cup_{i=1}^{\infty}   S_{i} )$ $\leq$  $\sum_{i=1}^{\infty} \mu^{\ast}(S_{i}) \leq$ 2$\mu( X ) <  \infty$ $\  $ (2).

Now we are going to construct a $\mu$-Cauchy sequence which converges to $\cup_{i=1}^{\infty}   S_{i}$ with respect to the pseudometric $d (\cdot,\cdot)$ = $\mu^{\ast} (\cdot \triangle \cdot)$.

\[\begin{array}{cccccc}
  B^{1}_{1} & B^{1}_{2} & \cdots & B^{1}_{k}  & \cdots & \longrightarrow S_{1} \\
  B^{1}_{1} \cup B^{2}_{1} & B^{1}_{2} \cup B^{2}_{2} & \cdots & B^{1}_{k} \cup B^{2}_{k} & \cdots & \longrightarrow S_{1} \cup S_{2} \\
  \vdots & \vdots & \vdots & \vdots & \vdots & \vdots   \\
  B^{1}_{1} \cup B^{2}_{1} \cdots \cup B^{m}_{1} & B^{1}_{2} \cup B^{2}_{2} \cdots \cup B^{m}_{2} & \cdots  & B^{1}_{k} \cup B^{2}_{k} \cdots \cup B^{m}_{k} & \cdots & \longrightarrow S_{1} \cup S_{2} \cup \cdots \cup S_{m} \\
  \vdots & \vdots & \vdots & \vdots & \vdots & \vdots
\end{array}\]

Note that for each $S_{1} \cup S_{2} \cup \cdots \cup S_{m}$, we can choose a $K$ so that $B^{1}_{K} \cup B^{2}_{K} \cdots B^{m}_{K}$ is arbitrary close to it in terms of the pseudometric $d (\cdot,\cdot)$.

Now, (2) implies for any $L$ there exists an $N_{L}$ such that $\mu^{\ast}( \cup_{i=N_{L}+1}^{\infty}   S_{i} ) $ $\leq  \sum_{i=N_{L}+1}^{\infty} \mu^{\ast}(S_{i})$  $< \frac{1}{2L}$. Moreover, for each $N_{L}$ there exists an $K_{L}$ such that

$ \mu^{\ast} ( S_{1} \cup S_{2} \cup \cdots \cup S_{N_{L}}    \    \triangle  \  B^{1}_{K_{L}} \cup B^{2}_{K_{L}} \cdots B^{N_{L}}_{K_{L}}   )  < \frac{1}{2L}  $.
Thus, by Lemma $\ref{L:2}$

\[
\mu^{\ast}(\cup_{i=1}^{\infty}   S_{i}  \triangle  \cup_{i=1}^{N_{L}} B^{i}_{K_{L}}     ) \leq  \mu^{\ast}(\cup_{i=N_{L}+1}^{\infty}   S_{i}) + \mu^{\ast}(\cup_{i=1}^{N_{L}}   S_{i}  \triangle  \cup_{i=1}^{N_{L}} B^{i}_{K_{L}}     ) < \frac{1}{L}.
\]

Now, call $Y_{L} $ =  $\cup_{i=1}^{N_{L}} B^{i}_{K_{L}}$ for each $L$. By design, $\{ Y_{L} \}$ converges to $\cup_{i=1}^{\infty}   S_{i} $. Also by the triangle inequality, $\mu ( Y_{n} \triangle Y_{m})$=$\mu^{\ast} (Y_{n} \triangle Y_{m}) \leq$ $\mu^{\ast} (Y_{n} \triangle \cup_{i=1}^{\infty}   S_{i}) + \mu^{\ast} (\cup_{i=1}^{\infty}   S_{i} \triangle Y_{m})  $ implies $\{ Y_{L} \}$ is a $\mu$-Cauchy sequence.

Thus, $\cup_{i=1}^{\infty}   S_{i} \in   \widetilde{\mathbf{S}} $. Therefore, $\widetilde{\mathbf{S}} $ is a $\sigma$-algebra.

\end{proof}

\begin{lemma}\label{L:7} $  \widetilde{\mu}$ is a countably additive measure on $ \widetilde{\mathbf{S}}$.

\end{lemma}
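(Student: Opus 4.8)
The plan is to reduce countable additivity to the finite additivity already established in Lemma \ref{L:6} together with the two‑sided estimate obtained inside the proof of Lemma \ref{L:9}. First I would note that $\widetilde{\mu}(\emptyset) = \lim \mu(\emptyset) = 0$ (taking the constant $\mu$-Cauchy sequence $B_n = \emptyset$), so $\widetilde{\mu}$ is a genuine nonnegative set function, and by Remark \ref{R:2} it agrees with $\mu^{\ast}$ on all of $\widetilde{\mathbf{S}}$. That identification $\widetilde{\mu} = \mu^{\ast}|_{\widetilde{\mathbf{S}}}$ is the tool that makes the rest go through, since $\mu^{\ast}$ is countably subadditive on all of $\mathbf{P}(X)$ for free.

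Next, let $\{S_i\}_{i\ge 1}\subset\widetilde{\mathbf{S}}$ be mutually disjoint with union $S=\cup_{i=1}^\infty S_i\in\widetilde{\mathbf{S}}$ (membership is Lemma \ref{L:9}). For the ``$\le$'' direction I would simply invoke countable subadditivity of $\mu^{\ast}$ and Remark \ref{R:2}: $\widetilde{\mu}(S)=\mu^{\ast}(S)\le\sum_{i=1}^\infty\mu^{\ast}(S_i)=\sum_{i=1}^\infty\widetilde{\mu}(S_i)$. For the ``$\ge$'' direction I would use monotonicity of $\mu^{\ast}$ together with the finite-additivity identity from Lemma \ref{L:6}: for every $n$, since $\cup_{i=1}^n S_i\subseteq S$, we get $\sum_{i=1}^n\widetilde{\mu}(S_i)=\widetilde{\mu}(\cup_{i=1}^n S_i)=\mu^{\ast}(\cup_{i=1}^n S_i)\le\mu^{\ast}(S)=\widetilde{\mu}(S)$; letting $n\to\infty$ gives $\sum_{i=1}^\infty\widetilde{\mu}(S_i)\le\widetilde{\mu}(S)$. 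Combining the two inequalities yields $\widetilde{\mu}(S)=\sum_{i=1}^\infty\widetilde{\mu}(S_i)$, which is countable additivity; well-definedness of $\widetilde{\mu}$ (independence of the choice of approximating $\mu$-Cauchy sequence) also follows from Remark \ref{R:2}, since any such sequence forces $\lim\mu(B_n)=\mu^{\ast}(S)$.

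I do not expect a serious obstacle here: essentially everything has been front-loaded into Lemmas \ref{L:6} and \ref{L:9} and Remark \ref{R:2}. The only point requiring a little care is making sure monotonicity and countable subadditivity of $\mu^{\ast}$ are legitimately available — these are standard properties of any Caratheodory outer measure and are implicit in the construction already cited from [1], so I would state them as known facts rather than reprove them. A secondary, purely cosmetic, point is to record explicitly that $\widetilde{\mu}\ge 0$ and $\widetilde{\mu}(\emptyset)=0$ so that the word ``measure'' is fully justified; with finite additivity (Lemma \ref{L:6}) upgraded to countable additivity as above, $(\widetilde{\mathbf{S}},\widetilde{\mu})$ is then a bona fide measure space, completing the identification with the Caratheodory extension.
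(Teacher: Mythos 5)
Your proof is correct, and it reaches the conclusion by a slightly different route than the paper. The paper writes the exact decomposition $\widetilde{\mu}(\cup_{i=1}^{\infty}S_i)=\widetilde{\mu}(\cup_{i=n+1}^{\infty}S_i)+\sum_{i=1}^{n}\widetilde{\mu}(S_i)$ (finite additivity from Lemma \ref{L:6} applied to the head $\cup_{i=1}^{n}S_i$ and the tail $\cup_{i=n+1}^{\infty}S_i$, both of which lie in $\widetilde{\mathbf{S}}$ by Lemmas \ref{L:5} and \ref{L:9}), and then kills the tail term using the convergence of $\sum\mu^{\ast}(S_i)$ established in (2). You instead run the standard two-sided sandwich: countable subadditivity of $\mu^{\ast}$ plus Remark \ref{R:2} for $\widetilde{\mu}(S)\le\sum\widetilde{\mu}(S_i)$, and monotonicity of $\mu^{\ast}$ plus Lemma \ref{L:6} on finite unions for the reverse inequality. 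The ingredients are the same (Lemma \ref{L:6}, Remark \ref{R:2}, and the elementary properties of the outer measure already invoked in deriving (2)), but your version has the small advantage of never applying $\widetilde{\mu}$ to the infinite tail union, replacing that step by monotonicity of $\mu^{\ast}$; the paper's version has the advantage of exhibiting the error $\widetilde{\mu}(S)-\sum_{i=1}^{n}\widetilde{\mu}(S_i)$ explicitly as the measure of the tail. Your added remarks on $\widetilde{\mu}(\emptyset)=0$, nonnegativity, and well-definedness are not in the paper's proof but are harmless and in fact tighten the claim that $\widetilde{\mu}$ is a measure.
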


\begin{proof}
By (2) for any $\epsilon > 0$ there exists  an $M $ such that $\widetilde{\mu} ( \cup_{i=n+1}^{\infty}   S_{i} ) \leq$ $ \sum_{i=n+1}^{\infty}   \widetilde{\mu}   ( S_{i} ) < \epsilon $ for $n \geq M  $.

Moreover, $\widetilde{\mu} ( \cup_{i=1}^{\infty}   S_{i} ) = $ $  \widetilde{\mu} ( \cup_{i=n+1}^{\infty}   S_{i} ) + \widetilde{\mu} ( \cup_{i=1}^{n}   S_{i} )   = $ $   \widetilde{\mu} ( \cup_{i=n+1}^{\infty}   S_{i} ) + \sum_{i=1}^{n}   \widetilde{\mu}   ( S_{i} )   $ for all $n$ by Lemmas 6, 7, and 8.

Thus, $\widetilde{\mu} ( \cup_{i=1}^{\infty}   S_{i} )$ =  $\sum_{i=1}^{\infty}   \widetilde{\mu}   ( S_{i} )$. Therefore, $\widetilde{\mu}$ is a countably additive measure on $ \widetilde{\mathbf{S}}$.

\end{proof}

\begin{theorem}\label{T:1} ($\widetilde{\mu}$, $ \widetilde{\mathbf{S}}$) is a measure space.
\end{theorem}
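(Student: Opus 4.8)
The plan is simply to assemble the pieces already in place. Recall that a measure space is a $\sigma$-algebra equipped with a nonnegative, countably additive set function that vanishes on $\emptyset$. Lemma \ref{L:9} supplies the $\sigma$-algebra and Lemma \ref{L:7} the countable additivity of $\widetilde{\mu}$, so the only gaps I need to fill are that $\widetilde{\mu}$ is well-defined, that it is nonnegative, and that it is zero on the empty set.

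First I would check that $\widetilde{\mu}$ does not depend on the choice of approximating sequence: if $\{B_n\}$ and $\{B'_n\}$ are $\mu$-Cauchy sequences both corresponding to the same $S \in \widetilde{\mathbf{S}}$, then by Lemmas \ref{L:3} and \ref{L:1}, $|\mu(B_n)-\mu(B'_n)| = |\mu^{\ast}(B_n)-\mu^{\ast}(B'_n)| \le \mu^{\ast}(B_n \triangle B'_n) \le \mu^{\ast}(B_n \triangle S)+\mu^{\ast}(S \triangle B'_n) \to 0$, so the two limits agree; alternatively, this is already contained in Remark \ref{R:2}, which identifies $\widetilde{\mu}(S)$ with $\mu^{\ast}(S)$. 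Nonnegativity then follows at once since $\mu^{\ast} \ge 0$ (equivalently, since $\mu(B_n) \ge 0$ for every $n$). For the normalization, note $\emptyset \in \Omega$ and the constant sequence $B_n = \emptyset$ is $\mu$-Cauchy and converges to $\emptyset$ in the pseudometric $d$, whence $\widetilde{\mu}(\emptyset) = \lim \mu(\emptyset) = 0$. Combining these remarks with Lemmas \ref{L:9} and \ref{L:7} then gives that $(\widetilde{\mu},\widetilde{\mathbf{S}})$ satisfies every axiom of a measure space.

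I do not expect a genuine obstacle in proving Theorem \ref{T:1}: every substantive ingredient — that $\widetilde{\mathbf{S}}$ is an algebra, that it is closed under countable unions, and that $\widetilde{\mu}$ is finitely and then countably additive — has been established in the preceding lemmas, so the proof is essentially bookkeeping. The one point I would state explicitly for the reader is that Definition \ref{D:2} is unambiguous, and this is exactly what Remark \ref{R:2} secures by tying $\widetilde{\mu}$ to the outer measure $\mu^{\ast}$.
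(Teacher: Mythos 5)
Your proof is correct and follows essentially the same route as the paper, which simply cites Lemma \ref{L:9} and Lemma \ref{L:7}. The extra checks you supply (well-definedness of $\widetilde{\mu}$ via Remark \ref{R:2}, nonnegativity, and $\widetilde{\mu}(\emptyset)=0$) are sound and, if anything, make the argument more complete than the paper's one-line proof.
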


\begin{proof}
The claim follows by Lemmas $\ref{L:9}$ and $\ref{L:7}$.
\end{proof}

\begin{theorem}\label{T:2}
$E$ is a measurable set iff  $E$ is in $ \widetilde{\mathbf{S}}$.
\end{theorem}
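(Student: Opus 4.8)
The goal is to show that the Caratheodory-measurable sets (those $E \in \mathbf{P}(X)$ satisfying $\mu^{\ast}(A) = \mu^{\ast}(A \cap E) + \mu^{\ast}(A \cap E^{\textbf{C}})$ for all $A$) coincide with $\widetilde{\mathbf{S}}$. Since both are $\sigma$-algebras on which $\mu^{\ast}$ restricts to a measure agreeing with $\mu$ on $\Omega$, the cleanest route is a two-sided containment. I expect the inclusion $\widetilde{\mathbf{S}} \subseteq M$ to be the routine direction and $M \subseteq \widetilde{\mathbf{S}}$ to be the main obstacle.

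The plan is as follows. First I would prove $\widetilde{\mathbf{S}} \subseteq M$. Fix $S \in \widetilde{\mathbf{S}}$ with a $\mu$-Cauchy sequence $\{B_n\} \subseteq \Omega$ such that $\mu^{\ast}(B_n \triangle S) \to 0$. For an arbitrary test set $A \in \mathbf{P}(X)$, each $B_n$ is measurable (elements of an algebra are Caratheodory-measurable — this is the standard fact from the Caratheodory construction, provable directly or citable from [1]), so $\mu^{\ast}(A) = \mu^{\ast}(A \cap B_n) + \mu^{\ast}(A \cap B_n^{\textbf{C}})$. Then I would let $n \to \infty$: the key estimate is that $|\mu^{\ast}(A \cap B_n) - \mu^{\ast}(A \cap S)| \leq \mu^{\ast}(A \cap (B_n \triangle S)) \leq \mu^{\ast}(B_n \triangle S) \to 0$, using subadditivity and monotonicity of $\mu^{\ast}$ together with the inclusion $(A\cap B_n)\triangle(A\cap S) \subseteq B_n \triangle S$; similarly for the complement, since $B_n^{\textbf{C}} \triangle S^{\textbf{C}} = B_n \triangle S$. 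Passing to the limit gives $\mu^{\ast}(A) = \mu^{\ast}(A \cap S) + \mu^{\ast}(A \cap S^{\textbf{C}})$, so $S \in M$.

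Next I would prove $M \subseteq \widetilde{\mathbf{S}}$, the harder direction. The natural strategy is approximation: given a measurable $E$, I want to produce a $\mu$-Cauchy sequence in $\Omega$ converging to $E$ in the pseudometric $d$. Since $\mu^{\ast}(E) \leq \mu^{\ast}(X) = \mu(X) < \infty$, by definition of $\mu^{\ast}$ as an infimum, for each $k$ there is a countable cover $E \subseteq \bigcup_i A_i^{(k)}$ with $A_i^{(k)} \in \Omega$ and $\sum_i \mu(A_i^{(k)}) < \mu^{\ast}(E) + 1/k$. Truncating the union at a finite stage $N_k$ (legitimate because the tail sum is small) and setting $C_k = \bigcup_{i=1}^{N_k} A_i^{(k)} \in \Omega$, one gets $\mu^{\ast}(C_k \triangle E) = \mu^{\ast}(C_k \setminus E) + \mu^{\ast}(E \setminus C_k)$, and both pieces are controlled: $\mu^{\ast}(E \setminus C_k)$ is small because it is covered by the discarded tail, and $\mu^{\ast}(C_k \setminus E) = \mu^{\ast}(C_k) - \mu^{\ast}(C_k \cap E)$ is small by the measurability of $C_k$ applied to the test set $\bigcup_i A_i^{(k)}$ together with the near-optimality of the cover. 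Thus $d(C_k, E) \to 0$, and $\{C_k\}$ is automatically $\mu$-Cauchy by the triangle inequality (Lemma \ref{L:1}), exactly as in the final paragraph of the proof of Lemma \ref{L:9}. Hence $E \in \widetilde{\mathbf{S}}$.

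The main obstacle is the bookkeeping in the $M \subseteq \widetilde{\mathbf{S}}$ direction: making precise that the finite truncation $C_k$ approximates $E$ from "both sides" in the symmetric-difference pseudometric. The essential point — and the one worth stating carefully — is that measurability of $E$ (or, as used above, of the approximants $C_k$) is precisely what forces $\mu^{\ast}(C_k \setminus E)$ to vanish; without it one only controls $\mu^{\ast}(E \setminus C_k)$, which is not enough for $d$-convergence. Once that estimate is in hand, everything else is subadditivity, monotonicity, and the triangle inequality already established in Lemmas \ref{L:1}--\ref{L:3}.
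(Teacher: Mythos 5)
Your proposal is correct and follows essentially the same route as the paper: for $\widetilde{\mathbf{S}}\subseteq M$ you pass to the limit in the Caratheodory identity for the measurable approximants $B_n$ (your two-sided estimate $|\mu^{\ast}(A\cap B_n)-\mu^{\ast}(A\cap S)|\le\mu^{\ast}(B_n\triangle S)$ is a slightly cleaner way of doing what the paper does with monotonicity plus subadditivity), and for $M\subseteq\widetilde{\mathbf{S}}$ you truncate near-optimal countable covers exactly as the paper does. The one wording slip --- attributing the identity $\mu^{\ast}(C_k\setminus E)=\mu^{\ast}(C_k)-\mu^{\ast}(C_k\cap E)$ to the measurability of $C_k$ rather than to the measurability of $E$ tested against the set $C_k$ --- is harmless, since the identity you actually write down is precisely the measurability condition for $E$, which is the same ingredient the paper uses.
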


\begin{proof}
Suppose now $E$ is measurable. $\mu^{\ast}( E ) < \infty$ implies for any $n$ there exists an $C_{n}=  \cup_{i=1}^{\infty} A^{n}_{i}$ where $A^{n}_{i} \cap A^{n}_{j} = \emptyset $ for i $\neq$ j , $A^{n}_{i} \in \Omega$ for all i $\geq$ 1 such that $E$ $\subseteq $ $C_{n}$ and  $\mu^{\ast}( E )$ $>$ $\mu^{\ast}( C_{n} )$ - $\frac{1}{2n}  $. Now $E$ is measurable implies $\mu^{\ast}( C_{n} )$ = $\mu^{\ast}(C_{n} \cap E)$ +    $\mu^{\ast}( C_{n} \cap E^{\textbf{C}})$ = $\mu^{\ast}(C_{n} \triangle E)$ +  $\mu^{\ast}( E )$ which implies $\mu^{\ast}(C_{n} \triangle E)$ $<$ $\frac{1}{2n}$. Furthermore, $\mu^{\ast}( C_{n} )$ = $\sum_{i=1}^{\infty} \mu^{\ast}( A^{n}_{i}   )$ $<$ $\mu^{\ast}( E )$ + $\frac{1}{2n}$ implies there exists an $N_{n}$ such that  $\sum_{i=N_{n} + 1}^{\infty} \mu^{\ast}( A^{n}_{i}   )$ $<$  $\frac{1}{2n}$ $ \ $ (3). Let $ C_{n}^{N_{n}}  $ = $ \cup_{i=1}^{N_{n}} A^{n}_{i}  $. By the triangle inequality and (3), we have
$\mu^{\ast}(  C_{n}^{N_{n}}   \triangle E)$  $\leq$  $\mu^{\ast}(  C_{n}^{N_{n}}   \triangle  C_{n}  )$ + $\mu^{\ast}(  C_{n}  \triangle E)$ $\leq$ $\frac{1}{n}$. By design  $C_{n}^{N_{n}}$ is in $\Omega$. Moreover, by the triangle inequality for $n > m$, $\mu^{\ast}(  C_{n}^{N_{n}}   \triangle C_{m}^{N_{m}} )$  $\leq$  $\mu^{\ast}(  C_{n}^{N_{n}}   \triangle  E )$ + $\mu^{\ast}(  C_{m}^{N_{m}}    \triangle E)$ $\leq$ $\frac{2}{m}$ implies  $\{ C_{n}^{N_{n}} \} $ is a $\mu $-Cauchy sequence. Hence $E \in$  $ \widetilde{\mathbf{S}}$.\\

Suppose $E \in $ $ \widetilde{\mathbf{S}}$. Then there exists a $\mu$-Cauchy sequence $ \{ B_{n} \}$ such that
$ \lim \mu^{\ast} ( B_{n}   \triangle E)  =0   $. It follows we must have $ \lim \mu^{\ast} ( B_{n}   \cap E^{\textbf{C}} )  =0   $ and $ \lim \mu^{\ast} ( (B_{n}) ^{\textbf{C}}   \cap E)  =0   $ $ \ $ (4). Note for $A \in$ $ \mathbf{P}(X)$ we must have

$\mu^{\ast}( A ) \leq $ $\mu^{\ast}( A \cap E ) $ + $\mu^{\ast}( A \cap E^{\textbf{C}} ) $ by the subadditivity of $\mu^{\ast}$. Moreover, $B_{n}$ is measurable for each $n$ implies $\mu^{\ast}( A )$ = $\mu^{\ast}( A \cap B_{n} ) $ + $\mu^{\ast}( A \cap (B_{n})^{\textbf{C}} ) $, $\mu^{\ast}( A \cap E)$ = $\mu^{\ast}( A \cap E \cap B_{n} ) $ + $\mu^{\ast}( A \cap E \cap (B_{n})^{\textbf{C}} ) $ and $\mu^{\ast}( A \cap E^{\textbf{C}})$ = $\mu^{\ast}( A \cap E^{\textbf{C}} \cap B_{n} ) $ + $\mu^{\ast}( A \cap E^{\textbf{C}} \cap (B_{n})^{\textbf{C}} ) $ $ \ $ (5).

Now by taking limits on both sides of (5) and using (4),

we get $\mu^{\ast}( A )$ = $ \lim \mu^{\ast}( A \cap B_{n} ) $ + $ \lim \mu^{\ast}( A \cap (B_{n})^{\textbf{C}} ) $, $\mu^{\ast}( A \cap E)$ = $\lim \mu^{\ast}( A \cap E \cap B_{n} ) $, and $\mu^{\ast}( A \cap E^{\textbf{C}})$ = $ \lim \mu^{\ast}( A \cap E^{\textbf{C}} \cap (B_{n})^{\textbf{C}} ) $. Moreover, $A \cap E \cap B_{n}$ $\subseteq $ $A \cap B_{n}$ and $A \cap E^{\textbf{C}} \cap (B_{n})^{\textbf{C}}$  $\subseteq $ $A \cap (B_{n})^{\textbf{C}}$ implies

$ \lim \mu^{\ast}( A \cap B_{n} ) $ $\geq$ $\mu^{\ast}( A \cap E)$ and  $ \lim \mu^{\ast}( A \cap (B_{n})^{\textbf{C}} ) $ $\geq$ $\mu^{\ast}( A \cap E^{\textbf{C}})$. Thus, we must have $\mu^{\ast}( A )$ $\geq$ $\mu^{\ast}( A \cap E)$ +    $\mu^{\ast}( A \cap E^{\textbf{C}})$. Hence, $\mu^{\ast}( A )$ = $\mu^{\ast}( A \cap E)$ +    $\mu^{\ast}( A \cap E^{\textbf{C}})$ and $E$ is measurable.

\end{proof}

\section{Conclusion}
 Theorems $\ref{T:1}$ and $\ref{T:2}$ show that the measure space ($\widetilde{\mu}$, $ \widetilde{\mathbf{S}}$) agrees with the Caratheodory Extension when $\mu$ is a finite measure. Moreover, theorem $\ref{T:2}$ show that measurable sets are exactly limit points of $\mu$-Cauchy sequences. The $\sigma$-finite case follows from the finite case.

     .
\section{Acknowledgement}
 The first author would like to thank his grandfather Waichi Tanaka for his inspiration and financial assistance and Andrew Aames for encouraging him to progress through the graduate program. With the kind support of both, the first author has progressed further than he ever thought possible. In addition, both authors would like to thank professor Michel L. Lapidus, and professor James D. Stafney for their professional advice on this paper, and to Anne Hansen for her editing assistance.

\end{document}